\renewcommand{\L}{\mathcal{L}}
\newcommand{\C}{\mathbb{C}}
\newcommand{\g}{\mathfrak{g}}
\newcommand{\h}{\mathfrak{h}}
\renewcommand{\deg}{\operatorname{deg}}
\renewcommand{\O}{\mathcal{O}}
\newtheorem{theorem}{Theorem}[section]
\newtheorem{proposition}[theorem]{Proposition}
\theoremstyle{definition}
\theoremstyle{remark}
\newtheorem{remark}[theorem]{Remark}
\numberwithin{equation}{section}
\begin{document}

\title[Degenerations to Filiform Lie Algebras of dimension 9]{Degenerations to Filiform Lie Algebras of dimension 9}

\author{Joan Felipe Herrera-Granada}
\address{\parbox{\linewidth}{{\tiny Departamento de Matem\'aticas y Estadística - Facultad de Ciencias Exactas y  Naturales,}\\{\tiny Universidad Nacional de Colombia sede Manizales, Colombia}}}

\email{jfherrerag@unal.edu.co}

\author{Oscar Marquez}
\address{\parbox{\linewidth}{{\tiny Departamento de Matem\'aticas - Centro de Ciências Naturais e Exatas,}\\{\tiny Universidade Federal de Santa Maria, Brazil}}}
\email{oscar.f.marquez-sosa@ufsm.br}

\author{Sonia Vera}
\address{\parbox{\linewidth}{{\tiny Departamento de Matem\'aticas - Facultad de Ciencias B\'asicas},\\
		{\tiny  Universidad de Antofagasta, Chile}}}
\email{sonia.v.vera.sv@gmail.com}
\date{}
\subjclass[2010]{Primary 17B30; Secondary 17B99}
\keywords{Filiform Lie algebras, Vergne's conjecture, Grunewald-O'Halloran conjecture, degenerations, deformations.}

\maketitle

\begin{abstract}
For most complex 9-dimensional filiform Lie algebra we find another non isomorphic Lie algebra that degenerates to it.
Since this is already known for nilpotent Lie algebras of rank $\ge 1$, only the characteristically nilpotent ones should be considered.
\end{abstract}

\section{Introduction}

In this paper we work with complex 9-dimensional filiform Lie algebras. For most cases we show that filiform Lie algebras are degenerations of another non isomorphic Lie algebra.
This adds more evidence supporting the Grunewald-O'Halloran conjecture, which states that every nilpotent Lie algebra is the degeneration of another non isomorphic Lie algebra. This conjecture is stronger than Vergne's conjecture which states that there are no rigid nilpotent Lie algebras in the variety of all Lie algebras.\\

In \cite{HGT} was proved that all nilpotent Lie algebras of rank $\ge 1$ are the degeneration of another
non isomorphic Lie algebra, remaining open the Grunewald-O'Halloran conjecture for characteristically nilpotent Lie algebras.

We now consider the filiform Lie algebras of dimension 9, which have been classified in \cite{GJMK}.
According to this classification, there are twenty one families parametrized by one or two complex parameters and seven isolate filiform algebras all of which are characteristically nilpotent.
For all  these families except one and five more  particular cases,  we construct following \cite{GO}, a non trivial linear deformation that corresponds to a degeneration.
We note that it is not known which ones of the linear deformations constructed in \cite{GO} does correspond to a degeneration and which ones does not.

Degeneration is transitive and through degeneration the nilpotency degree does not grow. Hence, the nilpotent Lie algebras of maximal nilpotency index, called filiforms, are on the top in the diagram of degenerations. Filiforms Lie algebras may degenerate to any other nilpotent Lie algebra, but only a filiform may degenerate to a given filiform.

\section{Preliminaries}

Let $\L_n$ be the algebraic variety of complex Lie algebras of dimension $n$,
together with the action of the group $GL_n=GL_n(\C)$ by `change of basis',
and denote the orbit of $\mu$ in $\L_n$ by $\O(\mu)$. 

A Lie algebra $\mu$ degenerates to a Lie algebra $\lambda$, $\mu \rightarrow_{\deg} \lambda$, if $\lambda\in\overline{\O(\mu)}$,
the Zariski closure of $\O(\mu)$. 

A linear deformation of a Lie algebra $\mu$ is a family $\mu_t$, $t\in \C^\times$,
of Lie algebras such that
\[ \mu_t=\mu + t\phi, \]
where $\phi$ is a Lie algebra bracket which in addition is a 2-cocycle of $\mu$.

If a linear deformation $\mu_t$ of $\mu$ is such that $\mu_t\in\O(\mu_1)$ for all $t\in\C^\times$, 
then $\mu_1\rightarrow_{\deg}\mu$.
In fact, for each $t\in\C^\times$ there exist $g_t\in GL_n$ such that $g_t^{-1}\cdot \mu_1=\mu_t$,
then $\lim_{t\mapsto 0}g_t^{-1}\cdot \mu_1=\lim_{t\mapsto 0}\mu_t=\mu$.
Then, in order to show that $\mu_1\rightarrow_{\deg}\mu$, we only need to prove that 
for each $t\in\C^\times$ there exist $g_t\in GL_n$ such that
\begin{equation}\label{eqn:degeneration}
 \mu_1(g_t(x),g_t(y)))=g_t(\mu_t(x,y)), \quad\text{for all $x,y\in\C^n$}.
\end{equation}

Let $(\g,\mu)$ be a given Lie algebra of dimension $n$ and let $\h$ be a codimension 1 ideal of $\g$ with a semisimple derivation $D$.
For any element $X$ of $\g$ outside $\h$, $\g=\langle X \rangle \oplus \h$.
The bilinear form $\mu_D$ on $\g$ defined by $\mu_D(X,z)= D(z)$ and $\mu_D(y,z)=0$, for $y,z\in\h$, is a 2-cocycle for $\mu$
and a Lie bracket.
Hence,
\begin{equation}\label{eqn:linear-deformation}
 \mu_t=\mu + t\mu_D,
\end{equation}
is a linear deformation of $\mu$.
If $\g$ is nilpotent, then $\mu_t$ is always solvable but not nilpotent. 
In particular, $\mu_t$ is not isomorphic to $\mu$ for all $t\in\C^\times$.
The construction described was given in \cite{GO}.

\section{9-dimensional filiforms}

Complex filiform Lie algebras of dimension 9 have been classified in \cite{GJMK}.
This classification is presented as a list of 24 families index by one or two parameter $\alpha\in\C$
\[ 
\begin{gathered}
\mu_9^{1,\alpha,\beta},\quad \mu_9^{2,\alpha},\quad \mu_9^{3,\alpha},\quad \mu_9^{4,\alpha,\beta},\quad 
\mu_9^{5,\alpha},\quad \mu_9^{7,\alpha},\quad \mu_9^{10,\alpha,\beta},\quad \mu_9^{11,\alpha,\beta},\\
\mu_9^{12,\alpha,\beta},\quad \mu_9^{13,\alpha},\quad \mu_9^{14,\alpha},\quad \mu_9^{17,\alpha,\beta},\quad 
\mu_9^{18,\alpha,\beta},\quad \mu_9^{19,\alpha},\quad \mu_9^{20,\alpha,\beta},\quad \mu_9^{21,\alpha},\\ 
\mu_9^{22,\alpha},\quad \mu_9^{23,\alpha},\quad \mu_9^{24,\alpha},\quad \mu_9^{25,\alpha},\quad \mu_9^{26,\alpha,\beta},
\quad \mu_9^{27,\alpha},\quad \mu_9^{28,\alpha},\quad \mu_9^{31,\alpha},
\end{gathered}
\]
and 14 isolated algebras
\[
\begin{gathered}
\mu_9^{6},\quad \mu_9^{8},\quad \mu_9^{9},\quad \mu_9^{15},\quad \mu_9^{16},\quad \mu_9^{29},\quad \mu_9^{30},\\ 
\mu_9^{32},\quad \mu_9^{33},\quad \mu_9^{34},\quad \mu_9^{35},\quad \mu_9^{36},\quad \mu_9^{37},\quad \mu_9^{38}.
\end{gathered}
\]

The following table shows those algebras with a semisimple derivation. 
We keep the name and the bases from \cite{GJMK}.

\begin{center}
\begin{tabular}{|c|c|c|c|}\hline
\scriptsize{$\mu$} & \scriptsize{$D\in Der(\mu)$} & \scriptsize{$\mu$} & \scriptsize{$D\in Der(\mu)$}\\ \hline 
\rule[-1.5cm]{0cm}{3.2cm}\scriptsize{$\mu_{9}^{3,\alpha}$} & \scriptsize{$\left(\begin{smallmatrix}
1 &  &  &  &  &  &  & & \\
 &2  &  &  &  &  &  & & \\
 &  &3  &  &  &  &  & & \\
 &  &  &4  &  &  &  & & \\
 &  &  &  &5  &  &  & & \\
 &  &  &  &  &6  &  & & \\
 &  &  &  &  &  &7  & & \\
 &  &  &  &  &  &  &8 & \\
 &  &  &  &  &  &  & &9 \\
\end{smallmatrix}\right)$} & \scriptsize{$\mu_{9}^{9}$} & \scriptsize{$\left(\begin{smallmatrix}
1 &  &  &  &  &  &  & & \\
 &2  &  &  &  &  &  & & \\
 &  &3  &  &  &  &  & & \\
 &  &  &4  &  &  &  & & \\
 &  &  &  &5  &  &  & & \\
 &  &  &  &  &6  &  & & \\
 &  &  &  &  &  &7  & & \\
 &  &  &  &  &  &  &8 & \\
 &  &  &  &  &  &  & &9 \\
\end{smallmatrix}\right)$}\\ \hline 
\rule[-1.6cm]{0cm}{3.4cm}\scriptsize{$\mu_{9}^{16}$} & \scriptsize{$\left(\begin{smallmatrix}
1 &  &  &  &  &  &  & & \\
 &2  &  &  &  &  &  & & \\
 &  &3  &  &  &  &  & & \\
 &  &  &4  &  &  &  & & \\
 &  &  &  &5  &  &  & & \\
 &  &  &  &  &6  &  & & \\
 &  &  &  &  &  &7  & & \\
 &  &  &  &  &  &  &8 & \\
 &  &  &  &  &  &  & &9 \\
\end{smallmatrix}\right)$} & \scriptsize{$\mu_{9}^{22,\alpha}$} & \scriptsize{$\left(\begin{smallmatrix}
1 &  &  &  &  &  &  & & \\
 &3  &  &  &  &  &  & & \\
 &  &4  &  &  &  &  & & \\
 &  &  &5  &  &  &  & & \\
 &  &  &  &6  &  &  & & \\
 &  &  &  &  &7  &  & & \\
 &  &  &  &  &  &8  & & \\
 &  &  &  &  &  &  &9 & \\
 &  &  &  &  &  &  & &10 \\
\end{smallmatrix}\right)$}\\ \hline
\rule[-1.3cm]{0cm}{2.8cm}\scriptsize{$\mu_{9}^{25,\alpha}$} & \scriptsize{$\left(\begin{smallmatrix}
1 &  &  &  &  &  &  & & \\
 &4  &  &  &  &  &  & & \\
 &  &5  &  &  &  &  & & \\
 &  &  &6  &  &  &  & & \\
 &  &  &  &7  &  &  & & \\
 &  &  &  &  &8  &  & & \\
 &  &  &  &  &  &9  & & \\
 &  &  &  &  &  &  &10 & \\
 &  &  &  &  &  &  & &11 \\
\end{smallmatrix}\right)$} & \scriptsize{$\mu_{9}^{33}$} & \scriptsize{$\left(\begin{smallmatrix}
1 &  &  &  &  &  &  & & \\
 &3  &  &  &  &  &  & & \\
 &  &4  &  &  &  &  & & \\
 &  &  &5  &  &  &  & & \\
 &  &  &  &6  &  &  & & \\
 &  &  &  &  &7  &  & & \\
 &  &  &  &  &  &8  & & \\
 &  &  &  &  &  &  &9 & \\
 &  &  &  &  &  &  & &10 \\
\end{smallmatrix}\right)$}\\ \hline
\rule[-1.7cm]{0cm}{3.6cm}\scriptsize{$\mu_{9}^{35}$} & \scriptsize{$\left(\begin{smallmatrix}
1 &  &  &  &  &  &  & & \\
 &4  &  &  &  &  &  & & \\
 &  &5  &  &  &  &  & & \\
 &  &  &6  &  &  &  & & \\
 &  &  &  &7  &  &  & & \\
 &  &  &  &  &8  &  & & \\
 &  &  &  &  &  &9  & & \\
 &  &  &  &  &  &  &10 & \\
 &  &  &  &  &  &  & &11 \\
\end{smallmatrix}\right)$} & \scriptsize{$\mu_{9}^{36}$} & \scriptsize{$\left(\begin{smallmatrix}
1 &  &  &  &  &  &  & & \\
 &5  &  &  &  &  &  & & \\
 &  &6  &  &  &  &  & & \\
 &  &  &7  &  &  &  & & \\
 &  &  &  &8  &  &  & & \\
 &  &  &  &  &9  &  & & \\
 &  &  &  &  &  &10  & & \\
 &  &  &  &  &  &  &11 & \\
 &  &  &  &  &  &  & &12 \\
\end{smallmatrix}\right)$}\\ \hline
\rule[-1.5cm]{0cm}{3.2cm}\scriptsize{$\mu_{9}^{37}$} & \scriptsize{$\left(\begin{smallmatrix}
1 &  &  &  &  &  &  & & \\
 &6  &  &  &  &  &  & & \\
 &  &7  &  &  &  &  & & \\
 &  &  &8  &  &  &  & & \\
 &  &  &  &9  &  &  & & \\
 &  &  &  &  &10  &  & & \\
 &  &  &  &  &  &11  & & \\
 &  &  &  &  &  &  &12 & \\
 &  &  &  &  &  &  & &13 \\
\end{smallmatrix}\right)$} & \scriptsize{$\mu_{9}^{38}$} & \scriptsize{$\left(\begin{smallmatrix}
1 &  &  &  &  &  &  & & \\
 &0  &  &  &  &  &  & & \\
 &  &1  &  &  &  &  & & \\
 &  &  &2  &  &  &  & & \\  
 &  &  &  &3  &  &  & & \\
 &  &  &  &  &4  &  & & \\
 &  &  &  &  &  &5  & & \\
 &  &  &  &  &  &  &6 & \\
 &  &  &  &  &  &  & &7 \\
\end{smallmatrix}\right)$}\\ \hline
\end{tabular}
\end{center}

The remaining algebras do not have any semisimple derivation, they are all characteristically nilpotent, that is, their derivation algebras are nilpotent as Lie algebras.
These are
\begin{equation}\label{eqn:caracteristically-nilpotent}
 \begin{gathered}
\mu_9^{1,\alpha,\beta},\quad \mu_9^{2,\alpha},\quad \mu_9^{4,\alpha,\beta},\quad \mu_9^{5,\alpha},\quad 
\mu_9^{6},\quad \mu_9^{7,\alpha},\quad \mu_9^{8},\\ 
\mu_9^{10,\alpha,\beta},\quad \mu_9^{11,\alpha,\beta},\quad \mu_9^{12,\alpha,\beta},\quad \mu_9^{13,\alpha},\quad 
\mu_9^{14,\alpha},\quad \mu_9^{15},\quad \mu_9^{17,\alpha,\beta},\\ 
\mu_9^{18,\alpha,\beta},\quad \mu_9^{19,\alpha},\quad \mu_9^{20,\alpha,\beta},\quad \mu_9^{21,\alpha},\quad \mu_9^{23,\alpha},\quad \mu_9^{24,\alpha},\quad \mu_9^{26,\alpha,\beta},\\ 
\mu_9^{27,\alpha},\quad \mu_9^{28,\alpha},\quad \mu_9^{29},\quad \mu_9^{30},\quad \mu_9^{31,\alpha},\quad \mu_9^{32},\quad \mu_9^{34}.
\end{gathered}
\end{equation}

\section{Filiform Lie algebras of dimension 9}

Let  $\mathfrak{g}$ be a filiform Lie algebra. Using the notation in the  classification given in \cite{GJMK} fix a basis $\mathcal{B}=\{X_1, X_2, \dots, X_9\}$ such that the  structure constants satisfies: 

\begin{equation}\label{Corchetes-Base}
[X_i, X_j]=\sum_{i+j\le l \le 9} C^l_{i,j} X_l
\end{equation}

In particular, for $i=1$ we have $[X_1, X_j]= X_{j+1}$. Moreover, all the $ C^l_{i,j}$ can be given in terms of the constant $ C^9_{r,s}$ in the  following table:\\

\begin{tabular}{|c|c|} \hline
	$\begin{array}{rl}
	C^5_{2, 3}&= C^9_{2,7} + 3 C^9_{3,6} + 2  C^9_{4,5}\\
	C^6_{2, 4}&= C^9_{2,7}+  3 C^9_{3,6}+ 2  C^9_{4,5}\\
	C^7_{2, 5}&= C^9_{2,7} + 2 C^9_{3,6} +  C^9_{4,5}\\
	C^8_{2, 6}&= C^9_{2,7} +   C^9_{3,6}\\
	C^7_{3, 4}&= C^9_{3,6} +   C^9_{4,5}\\
	C^8_{3, 5}&= C^9_{3,6} +   C^9_{4,5}\\[1 pt]
	\end{array} $
	&
	$\begin{array}{rl}
	C^6_{2, 3}&= C^9_{2,6} + 2 C^9_{3,5}\\
	C^7_{2, 4}&= C^9_{2,6} +2 C^9_{3,5}\\
	C^8_{2, 5}&= C^9_{2,6} + C^9_{3,5}\\  
	
	C^7_{2, 3}&= C^9_{2,5} + C^9_{3,4}\\
	C^8_{2, 4}&= C^9_{2,5}+C^9_{3,4}\\
	C^8_{3, 4}&= C^9_{3,5}\\
	C^8_{2, 3}&= C^9_{2,4}\\[1 pt]
	\end{array}$ \\\hline
\end{tabular}\bigskip

In additional, from the Jacobi identity the coefficients also must satisfy
\begin{equation}\label{IdentidadAdicionalJacobi}
-3\left( C^9_{3,6}\right)^2+2 C^9_{2,7}\, \,   C^9_{4,5} + C^9_{3,6}\,\,  C^9_{4,5} + 2 \left( C^9_{4,5}\right)^2=0
\end{equation}

For the sake of the completeness we  give the explicit coefficients $ C^9_{r,s}$ in the Appendix.\medskip

\section{Degenerations to filiforms}

We now show that for every complex filiform Lie algebra of dimension 9 there is another Lie algebra non isomorphic to it,
actually solvable non nilpotent, that degenerates to it.
That solvable algebra is constructed as a linear deformation of the original.
This is the main result of this paper.

\begin{theorem}\label{thm:main}
 Every 9-dimensional complex filiform Lie algebras non-isomorphic to $\mu_9^{17,\alpha,\beta}$ with $\beta \neq 0$ or $\mu_9^{1,-1,\beta}, \mu_9^{1,0,\beta}, \mu_9^{1,1,\beta}, \mu_9^{1,\tfrac{1}{2},\beta}$    is the degeneration of a solvable Lie algebra.
\end{theorem}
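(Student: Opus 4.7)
The plan is to reduce the theorem to a case-by-case analysis of the characteristically nilpotent families listed in \eqref{eqn:caracteristically-nilpotent}. Every $9$-dimensional filiform admitting a semisimple derivation (i.e.\ those in the table of Section~3) is of rank $\ge 1$, so the result for them is already contained in \cite{HGT}. Thus it remains to treat the characteristically nilpotent families using the Grunewald--O'Halloran construction recalled around \eqref{eqn:linear-deformation}.

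For each such $\mu=(\g,[\cdot,\cdot])$, the strategy is as follows. First, identify a codimension-one ideal $\h\subset\g$ that admits a semisimple derivation $D$; the natural candidate is $\h=\langle X_2,\ldots,X_9\rangle$, and one then searches inside $\Der(\h)$ for a convenient semisimple element. Once $(\h,D)$ is fixed, \eqref{eqn:linear-deformation} produces a solvable, non-nilpotent linear deformation $\mu_t=\mu+t\mu_D$, automatically non-isomorphic to $\mu$. To conclude $\mu_1\rightarrow_{\deg}\mu$ it suffices, by \eqref{eqn:degeneration}, to exhibit $g_t\in GL_9(\C)$ with $\mu_1(g_t x,g_t y)=g_t\mu_t(x,y)$; the natural ansatz is a diagonal $g_t$ whose entries are monomials in $t$ chosen so as to match the $D$-weights against the grading forced by $[X_1,X_i]=X_{i+1}$.

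The main obstacle is to exhibit, for each characteristically nilpotent family, a semisimple $D\in\Der(\h)$ whose cocycle $\mu_D$ is both nonzero and compatible with the higher structure constants governed by \eqref{Corchetes-Base} and the Jacobi constraint \eqref{IdentidadAdicionalJacobi}. Since $\Der(\g)$ itself is nilpotent, such a $D$ cannot be extended to $\g$; it lives strictly inside $\Der(\h)$, and the conjugation by $g_t$ must absorb the correction $t\mu_D$ coherently across all the relations in \eqref{Corchetes-Base}. For the excluded cases $\mu_9^{17,\alpha,\beta}$ with $\beta\neq 0$ and $\mu_9^{1,\alpha,\beta}$ with $\alpha\in\{-1,0,\tfrac12,1\}$, the constraints conspire either to force $\mu_D=0$ or to place $\mu_t$ outside $\O(\mu_1)$, so this ansatz breaks down and a different method would be required.

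I would present the outcome as a single table indexed by \eqref{eqn:caracteristically-nilpotent}, listing for each family the ideal $\h$, the chosen semisimple derivation $D$, the resulting cocycle $\mu_D$, and the diagonal conjugating matrix $g_t$; the identity \eqref{eqn:degeneration} then reduces to a componentwise verification on the structure constants, which is routine and readily automated by a symbolic algebra system.
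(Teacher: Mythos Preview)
Your reduction to the characteristically nilpotent families via \cite{HGT} matches the paper exactly, and the Grunewald--O'Halloran construction \eqref{eqn:linear-deformation} is the right engine. The gap lies in the ansatz for $g_t$.

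A diagonal $g_t$ cannot work here. With $\h=\langle X_2,\dots,X_9\rangle$ and $g_t(X_i)=a_iX_i$, the relations $[X_1,X_j]=X_{j+1}$ together with the perturbed bracket $\mu_t(X_1,X_j)=X_{j+1}+tD(X_j)$ force $a_1=t$ and $a_{j+1}=ta_j$. The remaining brackets $[X_i,X_j]=\sum_l C^l_{i,j}X_l$ (for $i,j\ge 2$) are untouched by $\mu_D$, so \eqref{eqn:degeneration} demands $a_ia_j=a_l$ for every $l$ with $C^l_{i,j}\ne 0$. From the table preceding \eqref{IdentidadAdicionalJacobi} several pairs $(i,j)$ contribute nonzero $C^l_{i,j}$ for \emph{different} values of $l$, forcing incompatible values of $a_2$. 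Equivalently, a diagonal $g_t$ would produce a nontrivial diagonalizable derivation of $\g$ itself, contradicting the assumption that $\g$ is characteristically nilpotent. The same obstruction prevents $D$ from being diagonal in the basis $\{X_i\}$: it must be merely triangular with distinct diagonal entries.

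What the paper actually does is twofold. First, it uses \emph{two} codimension-one ideals, $\h_1=\langle X_1,X_3,\dots,X_9\rangle$ and $\h_2=\langle X_2,\dots,X_9\rangle$, splitting the list \eqref{eqn:caracteristically-nilpotent} between them; several families are handled only through $\h_1$. Second, for each ideal it writes $D$ and $g_t$ as lower-triangular matrices whose off-diagonal entries are determined recursively (see \eqref{recurrencia} and the closed formula \eqref{coeficiente r,s}) from the equations $q(1,r,s)=0$, and then checks the remaining equations $q(i,j,l)=0$ for $i\ge2$ case by case. The substantive content is precisely this non-diagonal correction; your outline omits it, and with a purely diagonal $g_t$ the verification you describe as ``routine'' would fail at the first nonhomogeneous bracket.
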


\begin{proof}
The result was already established in \cite{HGT} for nilpotent Lie algebras of rank $\ge 1$, that is, admitting a semisimple derivation.
Hence, we only need to consider the characteristically nilpotent ones, listed in \eqref{eqn:caracteristically-nilpotent}.

For each  filiform characteristically nilpotent Lie algebra $\mathfrak{g}$, let $\mathfrak{h}_1$ and $\mathfrak{h}_2$  the Lie  subalgebras  with basis $\mathcal{B}_1=\{X_1, X_3, X_4, \ldots, X_9\}$ and $\mathcal{B}_2=\{X_2, X_3, X_4, \ldots, X_9\}$ respectively. By \eqref{Corchetes-Base} $\mathfrak{h}_1$ and $\mathfrak{h}_2$ are ideals of $\mathfrak{g}$. In order to find solutions of \eqref{eqn:degeneration}, we will   consider  two cases for $\mathfrak{h}$ being one of $\mathfrak{h}_1$ or $\mathfrak{h}_2$, then we will give $D$, and $g_t$  satisfying   \eqref{eqn:degeneration}.\\ 

The proof will follow case b case  from the next sections.

\end{proof}

\section{Case $\mathfrak{h}=\mathfrak{h}_1$}
Let $\mathfrak{h}_1$  the Lie ideal with generated by $\mathcal{B}_1=\{X_1, X_3, X_4, \ldots, X_9\}$. Suppose that there exists a derivation $D$ of $\mathfrak{h}_1$ such that  the   transformation matrix relative to the basis $\mathcal{B}_1$ is given by,

{\tiny 
	\begin{equation}\label{derivacionD1}
	D=\left(
	\begin{array}{cccccccc}
	d_1 & 0 & 0 & 0 & 0 & 0 & 0 & 0 \\
	0 & k\,  d_1  & 0 & 0 & 0 & 0 & 0 & 0 \\
	0 & 0 & (k+1)d_1  & 0 & 0 & 0 & 0 & 0 \\
	0 & - C^5_{2, 3} & 0 &  (k+2)\,d_1  & 0 & 0 & 0 & 0 \\
	0 & -C^6_{2, 3} & -C^5_{2, 3} & 0 & (k+3)\, d_1  & 0 & 0 & 0 \\
	0 & -C^7_{2, 3} & -C^6_{2, 3} & -C^5_{2, 3} & 0 &  (k+4)d_1  & 0 & 0 \\
	0 & -C^8_{2, 3} & -C^7_{2, 3} & -C^6_{2, 3} & -C^5_{2, 3} & 0 & (k+5)d_1  & 0 \\
	0 & -C^9_{2, 3} & -C^8_{2, 3} & -C^7_{2, 3} & -C^6_{2, 3} & -C^5_{2, 3} & 0 &  (k+6)d_1  \\
	\end{array}
	\right)
	\end{equation}}
For some values $k, d_1$. In this case,  define the following the linear maps on $\mathfrak{g}$ with transformations matrix relative to the basis $B$:

\[{\tiny T_0=\begin{pmatrix}
	p_0 &       &         &         &         &		    &  		  &   		&   \\
	0 & t       &         &         &         &         &		  &		    &   \\
	0 & 0       & p_0^{k} &         &         &         &		  &		    &   \\
	0 & p_{4,2} & 0       & p_0^{k+1} &       &         &		  &		    &   \\
	0 & p_{5,2} & p_0\,  p_{4,2} & 0             & p_0^{k+2} &     &		  &		    &   \\
	0 & p_{6,2} & p_0\,  p_{5,2} & p_0^2\, p_{4,2} & 0               & p_0^{k+3} &         &     	&   \\
	0 & p_{7,2} & p_0\,  p_{6,2} & p_0^2\, p_{5,2} & t^2 p_0 p_{4,2} & 0       & p_0^{k+4} &         &   \\
	0 & p_{8,2} & p_0\,  p_{7,2} & p_0^2\, p_{6,2} & p_0^3 p_{5,2}   &t^3\, p_0\,  p_{4,2} & 0       & p_0^{k+5} &	\\
	0 &   p_{9,2}     & p_0\,  p_{8,2} & p_0^2\,  p_{7,2} & p_0^3\,  p_{6,2}   & p_0^4 p_{5,2} & t^3\, p_0^2\,  p_{4,2} & 0	    & p_0^{k+6} \\
	\end{pmatrix}}\]

Also,

\[{\tiny T_1=\begin{pmatrix}
	0 &  &  &  &  &  &  &  &  \\
	0 & 0 &  &  &  &  &  &  &  \\
	0 & 0 & 0 &  &  &  &  &  &  \\
	0 & 0 & 0 & 0 &  &  &  &  &  \\
	0 & 0 & 0 & 0 & 0 &  &  &  &  \\
	0 & 0 & 0 & 0 & 0 & 0 &  &  &  \\
	0 & 0 & C^7_{3,4} & 0 & 0 & 0  & 0 &  &  \\
	0 & 0 & C^8_{3,4} & (C^7_{3,4} +C^8_{3,5}) p_0 & 0 & 0 & 0 & 0 &  \\
	0 & 0 & C^9_{3,4} & (C^8_{3,4} +C^9_{3,5}) p_0 & (C^7_{3,4} +C^8_{3,5}+C^9_{3,6} ) p_0^2 & 0 & 0 & 0 & 0 \\
	\end{pmatrix},}\]

\[{\tiny T_2=\begin{pmatrix}
	0 &  &  &  &  &  &  &  &  \\
	0 & 0 &  &  &  &  &  &  &  \\
	0 & 0 & 0 &  &  &  &  &  &  \\
	0 & 0 & 0 & 0 &  &  &  &  &  \\
	0 & 0 & 0 & 0 & 0 &  &  &  &  \\
	0 & 0 & 0 & 0 & 0 & 0 &  &  &  \\
	0 & 0 & 0 & 0 & 0 & 0  & 0 &  &  \\
	0 & 0 & C^8_{3,5} & 0 & 0 & 0 & 0 & 0 &  \\
	0 & 0 & C^9_{3,5} & (C^8_{3,5} +C^9_{3,6}) p_0 & 0 & 0 & 0 & 0 & 0 \\
	\end{pmatrix},}\]
	 
\[{\tiny T_3=\begin{pmatrix}
	0 &  &  &  &  &  &  &  &  \\
	0 & 0 &  &  &  &  &  &  &  \\
	0 & 0 & 0 &  &  &  &  &  &  \\
	0 & 0 & 0 & 0 &  &  &  &  &  \\
	0 & 0 & 0 & 0 & 0 &  &  &  &  \\
	0 & 0 & 0 & 0 & 0 &  &  &  &  \\
	0 & 0 & 0 & 0 & 0 & 0  &  &  &  \\
	0 & 0 & 0 & 0 & 0 & 0  &  &  &  \\
	0 & 0 & C^9_{3,6} & 0 & 0 & 0 & 0 & 0 & 0 \\
	\end{pmatrix},}\]

\[{\tiny T_4=\begin{pmatrix}
	0   \\
	0 & 0  \\
	p_0^{-(k+1)} & 0 & 0  \\
	0 & 0 & 0 & 0 \\
	0 & 0 & 0 & 0 & 0\\
	0 & 0 & 0 & 0 & 0 & 0\\
	0 & 0 & 0 & 0 &  C^7_{3,4}\,  & 0 & 0 \\
	0 & 0 & 0 & 0 & C^8_{3,4} \, &  (C^7_{3,4}+C^8_{3,5})\, p_0 & 0 & 0  \\
	0 & 0 & 0 & 0 & C^9_{3,4}  & (C^8_{3,4}+C^9_{3,5})\,  p_0^{} & \left(C^7_{3,4} +C^8_{3,5} +C^9_{3,6}\,  \right)\,  p_0^{2} & 0 & 0 \\
	\end{pmatrix}}\]

Finally, let $g_t\in GL_9$ given by:
\begin{equation}\label{matrizgcasoE2}
g_t=T_0+\tfrac{t-p_0^{k-1}}{k-1}  \left( p_{4,2}\, T_1+p_{5,2}\, T_2+ p_{6,2}\, T_3+  p_0^{k+1}\, T_4\right). 
\end{equation}
In the following sections, we  split the Lie algebras $\mathfrak{g}$ by the values ok $k$ that make $D$ a derivation of $\mathfrak{h}_1$. 
Then  we give specific  conditions on $p_0, p_{i,2}$  in order to  get $g$ as a  solution of \eqref{eqn:degeneration}.
\subsection{}
If $\mathfrak{g}\in \{ \mu_9^{26,\alpha, \beta},\mu_9^{27,\alpha},\mu_9^{28,\alpha},\mu_9^{29},\mu_9^{30},\mu_9^{31,\alpha},\mu_9^{32},\mu_9^{34}\}$ then $D$ as in \eqref{derivacionD1} is a derivation with $k=2,\, d_1=1$. In this case taking $p_0=t$ we have,
\begin{align*}
p_{4,2}&= -\frac{1}{2} \left(p_0-1\right) p_0^2 C^9{}_{2,7}\\  p_{5,2}&= -\frac{1}{3} \left(p_0-1\right) p_0^3 C^9{}_{2,6}\\  p_{6,2}&= \frac{1}{8} \left(p_0-1\right) p_0^3 \left(p_0 \left(C^9_{2,7}\right)^2-2 p_0 C^9{}_{2,5}-\left(C^9_{2,7}\right)^2\right)\\  p_{7,2}&= \frac{1}{30} \left(p_0-1\right) p_0^4 \left(-6 p_0 C^9{}_{2,4}+5 p_0 C^9{}_{2,6} C^9{}_{2,7}-5 C^9{}_{2,6} C^9{}_{2,7}\right)\\  
p_{8,2}&= -\frac{1}{144} \left(p_0-1\right) p_0^4 \left(3 p_0^2 \left(C^9_{2,7}\right)^3-6 p_0 \left(C^9_{2,7}\right)^3-18 p_0^2 C^9{}_{2,5} C^9{}_{2,7} +\right.\\
&\quad \left.+18 p_0 C^9{}_{2,5} C^9{}_{2,7}-8 p_0^2 \left(C^9_{2,6}\right)^2+8 p_0 \left(C^9_{2,6}\right)^2+24 p_0^2 C^9{}_{2,3}+3 \left(C^9_{2,7}\right)^3\right)
\end{align*}

\subsection{}
If $\mathfrak{g}\in \{ \mu_9^{13,\alpha}, \mu_9^{14,\alpha}, \mu_9^{15}\}$ then $D$ as in \eqref{derivacionD1} is a derivation with $k=3,\, d_1=1$. In this case taking $p_0=t$ we have,
\begin{align*}\label{casoD3}
p_{4,2}&= 0\\  p_{5,2}&= -\frac{1}{3} \left(p_0-1\right) p_0^4 C^6_{2,3}\\  p_{6,2}&= -\frac{1}{4} \left(p_0-1\right) p_0^5 C^9_{2,5}\\  p_{7,2}&= -\frac{1}{5} \left(p_0-1\right) p_0^6 C^9_{2,4}\\  p_{8,2}&= \frac{1}{36} \left(p_0-1\right) p_0^6 \left(2 p_0 \left(C^9_{2,6}\right)^2-2 \left(C^9_{2,6}\right)^2+3 C^9_{2,5}\right)
\end{align*}

\subsection{}
If $\mathfrak{g}\in \{\mu_9^{2,\alpha}, \mu_9^{5,\alpha}\}$ then $D$ as in \eqref{derivacionD1} is a derivation with $k=3,\,  d_1=\tfrac{C^9_{4,5} C^5_{2,3}}{C^9_{3,4}}$. In this case taking $p_0=t$ we have,
{\small 
	\begin{align*}
	p_{4,2}&= \frac{(1-t) t^3 C^9_{3,4}}{2 C^9_{4,5}}\\ 
	p_{5,2}&=  \frac{(1-t) t^4 C^6_{2,3} C^9_{3,4}}{3 C^5{}_{2,3} C^9{}_{4,5}}\\
	p_{6,2}&= \frac{(t-1) t^3 \left(C_{3,4}^9\right)^2 }{8 C^5_{2,3} \left(C_{4,5}^9\right)^2}\left(t^2 C^5_{2,3}-t C^5_{2,3}-2 t C^7_{3,4}-C^5_{2,3}+C^7_{2,5}+C^7_{3,4}\right)\\ 
	p_{7,2}&=  \frac{(t-1) t^3 \left(C_{3,4}^9\right)^2}{30 \left(C_{2,3}^5\right)^2 \left(C^9_{4,5}\right)^2} \left(C^5_{2,3} \left(\left(5 t^3-5 t^2-2 t-3\right) C^6_{2,3}+3 C^8_{2,5}\right)+\right.\\
	&\quad \left. +t C^6_{2,3} \left(3 (t-1) C^7_{3,4}+2 (1-2 t) C^8_{3,5}+2 C^8_{2,6}\right)\right)\\ 
	p_{8,2} &= \tfrac{1}{(-1 + t) C^8_{2,3} }\left( C^9_{3,4} \left(t^3 p_{4,2}-p_{3,2} p_{4,3}\right)+C^9_{3,5} \left(t^3 p_{5,2}-p_{3,2} p_{5,3}\right)+C^9_{3,6} \left(t^3 p_{6,2}-p_{3,2} p_{6,3}\right)+\right.\\
	& \quad \left. +t p_{4,3} C^8_{2,3}-t p_{4,3} C^9_{2,4}-t p_{5,3} C^9_{2,5}-t p_{6,3} C^9_{2,6}-t\,  p_{7,3} C^9_{2,7}+C^5_{2,3} \left(t \left(p_{7,3}-p_{9,5}\right)+p_{9,5}\right)\right.\\
	& \quad \left. +C^6_{2,3} \left(t \left(p_{6,3}-p_{9,6}\right)+p_{9,6}\right)+ C^7_{2,3} \left(t \left(p_{5,3}-p_{9,7}\right)+p_{9,7}\right)+\left(p_{4,3} p_{5,2}-p_{4,2} p_{5,3}\right) C^9_{4,5}+\right.\\
	& \quad \left.-(t-1) t^9 C^9_{2,3}-6 t \, d_{1,1} p_{9,3} \right)
	\end{align*}
}

\subsection{}
If $\mathfrak{g}\in\{\mu_9^{17,\alpha, 0},\mu_9^{21,\alpha} \}$ then $D$ as in \eqref{derivacionD1} is a derivation with $k=4,\, d_1=1$. In this case taking $p_0^3=t$ we have,
\begin{align*}
p_{4,2}&=p_{7,1}=0\\
p_{5,2}&= \frac{1}{3} p_0^3 \left(p_0^3-1\right) \left(-C^9{}_{2,6}-2\right)\\  p_{6,2}&= -\frac{1}{4} p_0^4 \left(p_0^3-1\right) C^9{}_{2,5}\\  p_{8,2}&= \frac{1}{18} p_0^3 \left(p_0^3-1\right) \left(C^9{}_{2,6}+2\right) \left(p_0^3 C^9{}_{2,6}-C^9{}_{2,6}+2 p_0^3-5\right)
\end{align*}

\begin{remark}
	This is not a solution for $\mu_9^{17,\alpha, \beta}$ if $\beta \neq  0$ since $D$ is not a derivation. Furthermore, all the derivations of $\mathfrak{h}_1$ are nilpotent.
\end{remark}
\subsection{}
If $\mathfrak{g}\in\{\mu_9^{18,\alpha, \beta},\mu_9^{19,\alpha},\mu_9^{23,\alpha},\mu_9^{24,\alpha}\} $ then $D$ as in \eqref{derivacionD1} is a derivation with $k=5,\, d_1=1$. In this case taking $p_0=t$ we have,
\begin{align*}
p_{4,2}&= -\frac{1}{2} \left(p_0-1\right) p_0^5 C^9{}_{2,7}\\  p_{5,2}&= -\frac{1}{3} \left(p_0-1\right) p_0^6 C^9{}_{2,6}\\  p_{6,2}&= \frac{1}{8} \left(p_0-1\right) p_0^6 \left(p_0 \left(C^9_{2,7}\right)^2-2 p_0 C^9{}_{20,4}-\left(C^9_{2,7}\right)^2-2 p_0\right)\\  p_{7,2}&= \frac{1}{30} \left(p_0-1\right) p_0^7 \left(-6 p_0 C^9{}_{2,4}+5 p_0 C^9{}_{2,6} C^9{}_{2,7}-5 C^9{}_{2,6} C^9{}_{2,7}\right)\\  
p_{8,2}&= -\frac{1}{144} \left(p_0-1\right) p_0^7 \left(3 p_0^2 \left(C^9_{2,7}\right)^3-6 p_0 \left(C^9_{2,7}\right)^3-18 p_0^2 C^9{}_{2,7}+42 p_0 C^9{}_{2,7}+\right.\\
&\quad \left.-18 p_0^2 C^9{}_{20,4} C^9{}_{2,7}+18 p_0 C^9{}_{20,4} C^9{}_{2,7}-8 p_0^2 \left(C^9_{2,6}\right)^2+8 p_0 \left(C^9_{2,6}\right)^2+3 \left(C^9_{2,7}\right)^3\right)
\end{align*}

\subsection{Other  cases }
We can solve two more special cases where $\mathfrak{h}=\mathfrak{h}_1$, but in this case \eqref{derivacionD1} is not a derivation; hence we have to construct the solution $g_t$ for these cases.   Let $\mathfrak{g}=\mu_9^{6,\alpha}$ then $D$ as in \eqref{derivacionD1} is a derivation with $t=p_0^2$.

{\tiny 
	\[D=\begin{pmatrix}	1 & 0 & 0 & 0 & 0 & 0 & 0 & 0 \\
	0 & 3  & 0 & 0 & 0 & 0 & 0 & 0 \\
	0 & 0 & 4  & 0 & 0 & 0 & 0 & 0 \\
	0 & 0 & 0 & 5  & 0 & 0 & 0 & 0 \\
	0 & 0 & 0 & 0 & 6  & 0 & 0 & 0 \\
	0 & 0 & 0 & 0 & 0 & 7  & 0 & 0 \\
	0 & 0 & 0 & 0 & 0 & 0 & 8  & 0 \\
	0 & 0 & 0 & 0 & 0 & 0 & 0 & 9 \\
	\end{pmatrix}\]
and	
	\[ g_t=\begin{pmatrix}
	p_0 & 0 & 0 & 0 & 0 & 0 & 0 & 0 & 0 \\
	0 & p_0^2 & 0 & 0 & 0 & 0 & 0 & 0 & 0 \\
	0 & 0 & p_0^3 & 0 & 0 & 0 & 0 & 0 & 0 \\
	0 & 0 & 0 & p_0^4 & 0 & 0 & 0 & 0 & 0 \\
	0 & 0 & 0 & 0 & p_0^5 & 0 & 0 & 0 & 0 \\
	0 & 0 & 0 & 0 & 0 & p_0^6 & 0 & 0 & 0 \\
	0 & 0 & 0 & 0 & 0 & 0 & p_0^7 & 0 & 0 \\
	0 & -\frac{ p_0^2 (p_0^4-1)}{6 } & 0 & 0 & 0 & 0 & 0 & p_0^8 & 0 \\
	0 &0 & -\frac{p_0^3 (p_0^4-1)}{6 } & 0 & 0 & 0 & 0 & 0 & p_0^9 \\
	\end{pmatrix}\]
}

Let $\mathfrak{g}=\mu_9^{7,\alpha}$. Consider the derivation on $\mathfrak{h}_1$:
{\tiny
	\[
	D=\begin{pmatrix}	
	1 & 0 & 0 & 0 & 0 & 0 & 0 & 0 \\
	0 & 3  & 0 & 0 & 0 & 0 & 0 & 0 \\
	0 & 0 & 4  & 0 & 0 & 0 & 0 & 0 \\
	0 & -\tfrac{\alpha}{3} & 0 & 5  & 0 & 0 & 0 & 0 \\
	0 & -1 & -\tfrac{\alpha}{3} & 0 & 6  & 0 & 0 & 0 \\
	0 & -\tfrac{\alpha^2}{18} & -1 & -\tfrac{\alpha}{3} & 0 & 7  & 0 & 0 \\
	0 & 0 & -\tfrac{\alpha^2}{18} & -1 & -\tfrac{\alpha}{3} & 0 & 8  & 0 \\
	0 & -\tfrac{6\alpha (  3  - 5 \alpha )}{324} & 0 & -\tfrac{\alpha^2}{18} & -1 &-\tfrac{\alpha}{3} & 0 & 9 \\
	\end{pmatrix}\]
}

taking $t=p_0^2$ we can find a solution of \eqref{eqn:linear-deformation} given by,

{\tiny	\[ g_t=\begin{pmatrix}
	p_0 & 0 & 0 & 0 & 0 & 0 & 0 & 0 & 0 \\
	0 & p_0^2 & 0 & 0 & 0 & 0 & 0 & 0 & 0 \\
	0 & 0 & p_0^3 & 0 & 0 & 0 & 0 & 0 & 0 \\
	0 & p_{4,2} & 0 & p_0^4 & 0 & 0 & 0 & 0 & 0 \\
	0 & p_{5,2} & p_{5,3} & 0 & p_0^5 & 0 & 0 & 0 & 0 \\
	0 & p_{6,2} & p_{6,3} & p_{6,4} & 0 & p_0^6 & 0 & 0 & 0 \\
	0 &p_{7,2} & p_{7,3} & p_{7,4} & p_{7,5} & 0 & p_0^7 & 0 & 0 \\
	0 & p_{8,2} & p_{8,3} & p_{8,4}  & p_{8,5} & p_{8,6} & 0 & p_0^8 & 0 \\
	0 & 0 & p_{9,3} & p_{9,4} & p_{9,5}& p_{9,6} & p_{9,7} & 0 & p_0^9 \\
	\end{pmatrix} \]}

where
\begin{align*}
p_{4,2}&=-\frac{1}{6} a (p_0-1) p_0^2 (p_0+1)\\
p_{5,2}&=-\frac{1}{3} (p_0-1) p_0^3 (p_0+1)\\
p_{5,3}&=-\frac{1}{6} a (p_0-1) p_0^3 (p_0+1)\\
p_{6,2}&= -\frac{1}{36} a (a+3) (p_0-1) p_0^2 (p_0+1)\\
p_{6,3}&=-\frac{1}{3} (p_0-1) p_0^4 (p_0+1)\\
p_{6,4}&=-\frac{1}{6} a (p_0-1) p_0^4 (p_0+1)\\
p_{7,2}&=\frac{1}{90} (p_0-1) p_0^3 (p_0+1) \left(5 a p_0^2-5 a-18\right)\\
p_{7,3}&= -\frac{1}{36} a (a+3) (p_0-1) p_0^3 (p_0+1)\\
p_{7,4}&=-\frac{1}{3} (p_0-1) p_0^5 (p_0+1)\\
p_{7,5}&= -\frac{1}{6} a (p_0-1) p_0^5 (p_0+1)\\
p_{8,2} &= \frac{1}{648} (p_0-1)^2 p_0^2 (p_0+1)^2 \left(p_0^2 a^3+2 a^3+10 p_0^2 a^2+14 a^2-6 p_0^2 a-12 a+36 p_0^2\right)\\
p_{8,3}&=\frac{1}{90} (p_0-1) p_0^4 (p_0+1) \left(5 a p_0^2-5 a-18\right)\\
p_{8,4}&= -\frac{1}{36} a (a+3) (p_0-1) p_0^4 (p_0+1)\\
p_{8,5}&= -\frac{1}{3} (p_0-1) p_0^6 (p_0+1)\\
p_{8,6}&= -\frac{1}{6} a (p_0-1) p_0^6 (p_0+1)\\
p_{9,3}&=\frac{1}{648} (p_0-1)^2 p_0^3 (p_0+1)^2 \left(p_0^2 a^3+2 a^3+10 p_0^2 a^2+14 a^2-6 p_0^2 a-12 a+36 p_0^2\right)\\
\end{align*}
\begin{align*}
p_{9,4}&= \frac{1}{90} (p_0-1) p_0^5 (p_0+1) \left(5 a p_0^2-5 a-18\right)\\
p_{9,5}&= -\frac{1}{36} a (a+3) (p_0-1) p_0^5 (p_0+1)\\
p_{9,6}&= -\frac{1}{3} (p_0-1) p_0^7 (p_0+1)\\
p_{9,7}&= -\frac{1}{6} a (p_0-1) p_0^7 (p_0+1)
\end{align*}

\section{Case $\mathfrak{h}=\mathfrak{h}_2$}

Let $\mathfrak{h}_2$ the Lie  ideal with basis $\mathcal{B}_2=\{X_2, X_3, X_4, \ldots, X_9\}$. Suppose that there exist a derivation  of $\mathfrak{h}_2$  with transformation matrix $D$ relative to the basis $\mathcal{B}_2$ given by, 

\begin{equation}\label{D caso h1}
{\tiny 
	D=\begin{pmatrix}
	d_{1,1} & 0 & 0 & 0 & 0 & 0 & 0 & 0 \\
	d_{2,1} & d_{2,2}  & 0 & 0 & 0 & 0 & 0 & 0 \\
	d_{3,1} & d_{3,2}  & d_{3,3}  & 0 & 0 & 0 & 0 & 0 \\
	d_{4,1} & d_{4,2}  & d_{4,3} &  d_{4,4}  & 0 & 0 & 0 & 0 \\
	d_{5,1} & d_{5,2}  & d_{5,3} & d_{5,4} & d_{5,5}  & 0 & 0 & 0 \\
	d_{6,1} & d_{6,2}  & d_{6,3}  & d_{6,4} & d_{6,5} &  d_{6,6}  & 0 & 0 \\
	d_{7,1} & d_{7,2}  & d_{7,3}  & d_{7,4} & d_{7,5} & d_{7,6} & d_{7,7}  & 0 \\
	d_{8,1} & d_{8,2}  & d_{8,3}  & d_{8,4} & d_{8,5} & d_{8,6} & d_{8,7} &  d_{8,8}  \\
	\end{pmatrix}}
\end{equation}

such that  $d_{i,i}\neq d_{j,j}$, \,  if $i \neq j$.  If such $D$ exists,  consider the following linear map on $\mathfrak{g}$ with transformation matrix relative to the basis $\mathcal{B}$ given by, 

\begin{equation}\label{g caso h1}
{\tiny g_t=\begin{pmatrix}
	t &  &  &  &  &  &  &  & \\[-0.1cm]
	0 & t^{d_{1,1}} &  &  &  &  &  &  & \\[-0.1cm]
	0 & p_{3,2} & t^{d_{2,2}} &  &  &  &  &  & \\[-0.1cm]
	0 & p_{4,2} & p_{4,3} & t^{d_{3,3}} &  &  &  &  & \\[-0.1cm]
	0 & p_{5,2} & p_{5,3} & p_{5,4} & t^{d_{4,4}} &  &  &  & \\[-0.1cm]
	0 & p_{6,2} & p_{6,3} & p_{6,4} & p_{6,5} &t^{d_{5,5}} &  &  & \\[-0.1cm]
	0 & p_{7,2} & p_{7,3} & p_{7,4} & p_{7,5} & p_{7,6} & t^{d_{6,6}} &  &  \\[-0.1cm]
	0 & p_{8,2} & p_{8,3} & p_{8,4} & p_{8,5} & p_{8,6} & p_{8,7} & t^{d_{7,7}} &  \\[-0.1cm]
	0 & p_{9,2} & p_{9,3} & p_{9,4} & p_{9,5} & p_{9,6} & p_{9,7} & p_{9,8} & t^{d_{8,8}} \\       
	\end{pmatrix}}\end{equation}

Clearly $g_t\in GL_9$ for all $t\neq 0$. We would like to find a solution of \eqref{eqn:degeneration} with these $D$ and $g_t$. Thus, we have to determine each value of $p_{i,j}$. Write \[\mu_1(g_t(X_i),g_t(X_j))- g_t \cdot \mu_t(X_i, X_j)=\sum_{l=1}^9 q(i,j,l) X_l \].

By the definitions of $g_t$ and $D$, the subspace $\langle X_i, X_{i+1},\ldots , X_9 \rangle$ is $g_t$- invariant and also $D$-invariant if $i\ge 2$. Thus by \eqref{Corchetes-Base}   $q(i,j,l)=0$ if $i+j>9$ or $l<i+j$.

For $1< s<r$ we have,
\begin{align*}
q(1,r,s)&=-t(d_{r-1,r-1}-d_{s-1,s-1})p_{r,s}+ t\, d_{r-1, s-1}(p_{r,r}-p_{s,s})+(p_{r, s+1}- t\,  p_{r-1,s})+\\
&\quad + \sum_{i=s+1}^{r-1} t\, (d_{i-1, s-1}\,  p_{r,i} - d_{r-1, i-1}\,  p_{i,s})\end{align*}

In particular, 
\[q(1,s+1,s)=-t(d_{s+1,s+1}-d_{s,s})p_{s+1,s}+ t\, d_{s, s-1}(p_{s+1,s+1}-p_{s,s})+(p_{s+1, s+1}- t\,  p_{s,s})\]

Since we know the values of the derivation $D$ and   the diagonal of $g_t$,  and since $d_{s,s}\neq d_{s+1,s+1}$, if  $q(1,s+1,s)=0$ we have:

\begin{equation}\label{primera diagonal}
\begin{aligned}
p_{s+1,s}&=Y(s+1, s)\Big( t\, d_{s, s-1}(p_{s+1,s+1}-p_{s,s}) +(p_{s+1, s+1}- t\,  p_{s,s})\Big)\\
&= p_{s+1,s+1} (1+t\, d_{s, s-1}) Y(s+1, s) +p_{s,s}\,  t(1+ \, d_{s, s-1}) Y(s, s+1)\\
\end{aligned}
\end{equation}

where 
\begin{equation}
Y(i,j)=\dfrac{1}{t(d_{i-1,i-1}-d_{j-1,j-1}  )   }.\\
\end{equation}

Recursively, if $p_{i,j}$ have been defined for all $i,j$ with $i-j<r-s$ we have,
\begin{equation}\label{recurrencia}
\begin{aligned}
p_{r,s}&= t \, d_{r-1, s-1}(p_{r,r}-p_{s,s})Y(r,s)+(p_{r, s+1}- t\,  p_{r-1,s})Y(r,s)+\\
&\quad + \sum_{i=s+1}^{r-1} t\, (d_{i-1, s-1}\,  p_{r,i} - d_{r-1, i-1}\,  p_{i,s})Y(r,s)
\end{aligned}
\end{equation}

In order to solve this recurrence explicitly we need to introduce some notation. \bigskip

Let $k\ge 2$ and positive integers $r>s$. Let $I_k$ be the set of  decreasing integer sequences of length $k$ starting in $r$ and finishing in $s$ .i.e; 
\[I_k(r,s)=\{r=a_1>a_2>\ldots >a_k=s\mid  a_i\in \mathbb{Z} \}\]
For $x,y,z$ such that $x>y$  we define:
\[f_z(x,y)=\begin{cases}
t\,  d_{x-1,y-1} &\quad \text{if \, } x-y>1\\
t(1+ d_{x-1,y-1}) &\quad \text{if \, } z\le y=x-1\\
(1+t\,  d_{x-1,y-1}) &\quad \text{if \, }z>y=x-1\\
\end{cases}\]
For simplification, we denote
\[Y_k(a_i,a_j)=\begin{cases}
Y(a_i,a_j)&\quad \text{if \, } i\neq j\\
Y(a_i,a_k)&\quad \text{if \, } i=j
\end{cases}\]
For $\vec{a}=(a_1, \ldots, a_i,\ldots,a_k)\in I_k(s,r)$ we denote $l(\vec{a})=k$. For $1\le i\le k$ let:
\[F_i(\vec{a})=p_{a_i,a_i}\left( \prod_{j=1}^{k-1}f_{a_i}(a_j,a_{j+1}) Y_k(a_{i}, a_{j})\right) \]

For  instance, for $\vec{a}=(8,6,5,4,2)\in I_5(8,2)$ 
\begin{align*}
F_3(\vec{a})&=p_{5,5}\Big(f_5(8,6)f_5(6,5)f_5(5,4)f_5(4,2)Y(5,8)Y(5,6)Y(5,2)Y(5,4)\Big)\\
&=p_{5,5}\frac{(t \, p_{7, 5}) \,\big( t(1+  \, p_{5,4})\big)(1+t \, p_{4,3})( t \, p_{3,1})}{t(d_{4,4}-d_{7,7})t(d_{4,4}-d_{5,5})t(d_{4,4}-d_{1,1})t(d_{4,4}-d_{3,3})}\\
&=p_{5,5}\frac{ \, p_{7, 5} \, (1+  \, p_{5,4})(1+t \, p_{4,3}) p_{3,1}}{t (d_{4,4}-d_{7,7})(d_{4,4}-d_{5,5})(d_{4,4}-d_{1,1})(d_{4,4}-d_{3,3})}\\
\end{align*}

and

\begin{align*}
F_2(\vec{a})&=p_{4,4}\frac{ \, p_{7, 5} \, (1+  \, p_{5,4})(1+ \, p_{4,3}) p_{3,1}}{ (d_{4,4}-d_{7,7})(d_{3,3}-d_{5,5})(d_{3,3}-d_{4,4})(d_{3,3}-d_{1,1})}\\
\end{align*}

\begin{proposition}
	Let $D$ be a derivation of $\mathfrak{h}_2$ as in  \eqref{D caso h1}  and  $g_t\in GL_9$ as in \eqref{g caso h1}. For $r>s$ let   $I_{r,s}=\coprod\limits_{k=2}^{r-s+1} I_k(r,s)$. If $q(1,r,s)=0$ then:
	\begin{equation}\label{coeficiente r,s}
	p_{r,s}=\sum_{\vec{a}\in I_{r,s}}\sum_{i=1}^{l(\vec{a})} F_i(\vec{a})
	\end{equation}
	
\end{proposition}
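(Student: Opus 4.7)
The plan is to prove the proposition by strong induction on $r - s \ge 1$.

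\textbf{Base case.} When $r - s = 1$, the index set reduces to $I_{r,s} = I_2(r,s) = \{(r,s)\}$. Unpacking the two summands $F_1((r,s))$ and $F_2((r,s))$ of \eqref{coeficiente r,s} using the defining cases of $f_z$ --- the third case $z > y = x-1$ (with $z = a_1 = r > s = a_2$) for $F_1$, and the second case $z \le y = x-1$ (with $z = a_2 = s$) for $F_2$ --- together with $Y_2(a_1,a_1) = Y(r,s)$ and $Y_2(a_2,a_1) = Y(s,r) = -Y(r,s)$, recovers exactly the two terms of \eqref{primera diagonal}.

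\textbf{Inductive step.} Assume the formula for all indices of difference smaller than $r-s$. First, I would rewrite the right-hand side of \eqref{recurrencia} by absorbing the isolated terms $+p_{r,s+1}$ and $-t\,p_{r-1,s}$ into the sum at $i = s+1$ and $i = r-1$ respectively, producing the uniform coefficients $(1 + t\,d_{s,s-1})\,Y(r,s)$ and $-t(1 + d_{r-1,r-2})\,Y(r,s)$. Then every coefficient takes the shape $f_r(a_{k-1},s)\,Y(r,s)$ (when the sequence is built by appending $s = a_k$ to a shorter sequence ending at $a_{k-1}$) or $f_s(r,a_2)\,Y(r,s)$ (when built by prepending $r = a_1$ to a shorter sequence starting at $a_2$), matching the three cases in the definition of $f_z$ precisely.

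Applying the induction hypothesis to every $p_{r,i}$ and $p_{i,s}$ on the right and distributing produces path-sum contributions for each $\vec a \in I_{r,s}$. For the endpoint terms $F_1(\vec a)$ (resp.\ $F_{l(\vec a)}(\vec a)$), the contribution comes from appending $s$ to a sequence $\vec b \in I_{r,a_{k-1}}$ (resp.\ prepending $r$ to $\vec c \in I_{a_2,s}$) using only one direction: the only $Y$-factor that needs updating is $Y_{k-1}(r,r) = Y(r,a_{k-1})$, which cancels against the newly introduced $Y_k(r,a_{k-1})$ and leaves the $Y(r,s) = Y_k(r,r)$ that appears in $F_1(\vec a)$, while the extra $f_r(a_{k-1},s)$ is precisely what the coefficient $f_r(a_{k-1},s)\,Y(r,s)$ supplies.

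\textbf{Key partial-fraction identity and main obstacle.} The delicate case is an internal index $1 < i < l(\vec a)$, where $F_i(\vec a)$ receives two contributions: one from $F_i(\vec b)$ via appending $s$ (with the $Y$-factor $Y(r,s)\,Y(a_i,r)$) and one from $F_{i-1}(\vec c)$ via prepending $r$ (with the $Y$-factor $-Y(r,s)\,Y(a_i,s)$). For the simplest nontrivial case $\vec a = (r,m,s)$, these combine via the partial fraction identity
\[
Y(r,s)\bigl[Y(m,r) - Y(m,s)\bigr] \;=\; Y(m,r)\,Y(m,s),
\]
which is the standard decomposition in the variable $d_{m-1,m-1}$ with poles at $d_{r-1,r-1}$ and $d_{s-1,s-1}$, and it produces exactly the $Y$-factor appearing in $F_2(\vec a)$. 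The main technical obstacle is the generalization: verifying, for arbitrary $k$ and arbitrary internal $i$, that the two inductive contributions combine into $F_i(\vec a)$. I would handle this by a secondary induction on the length $k$, peeling off either $a_1$ or $a_k$ and invoking the partial-fraction identity iteratively; the boundary cases of the definition of $f_z$ (when $a_{j+1} = a_j - 1$) must be tracked consistently, since these are precisely the spots where the standalone terms and sum terms of the recurrence were fused at the start of the inductive step.
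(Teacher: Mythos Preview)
Your approach is essentially the paper's: strong induction on $r-s$, the same uniform rewriting of the recurrence into the form $\sum_l\bigl(p_{r,l}\,f_r(l,s)\,Y(r,s)+p_{l,s}\,f_s(r,l)\,Y(s,r)\bigr)$, and the same partial-fraction identity. The one place where the paper is cleaner is exactly the spot you flag as the ``main obstacle''.

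The paper runs the inductive step in the \emph{opposite} direction: rather than substituting the hypothesis into the recurrence and reassembling, it takes the right-hand side of \eqref{coeficiente r,s} and decomposes each $F_i(\vec a)$ for $1<i<k$ directly via the identity. The point is that this needs \emph{no} secondary induction on $k$. For an internal $i$, the $f$-factors $f_{a_i}(a_j,a_{j+1})$ and the $Y$-factors $Y(a_i,a_j)$ with $2\le j\le k-1$, $j\neq i$, are common to $F_i(\vec a)$, $F_i(r,a_2,\dots,a_{k-1})$, and $F_{i-1}(a_2,\dots,s)$; the observations $f_{a_i}(a_{k-1},s)=f_r(a_{k-1},s)$ and $f_{a_i}(r,a_2)=f_s(r,a_2)$ (since $s<a_i<r$) dispose of the new $f$-factors; and what remains is literally
\[
Y(a_i,r)\,Y(a_i,s)\;=\;Y(a_i,r)\,Y(r,s)+Y(a_i,s)\,Y(s,r),
\]
which is your $k=3$ identity verbatim. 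So the single partial-fraction step suffices for every $k$, and the endpoint cases $i=1$, $i=k$ each contribute to only one of the two shortened sums. Summing over all $\vec a\in I_{r,s}$ then matches the rewritten recurrence term by term via the inductive hypothesis applied to $p_{r,a_{k-1}}$ and $p_{a_2,s}$. In short: your plan is correct, but the ``iterative peeling'' you anticipate is unnecessary---the extra factors carry through untouched.
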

\begin{proof}
	By induction on $n=r-s$. The case $n=1$ follows from
	\eqref{primera diagonal}. By induction, suppose that  the formula is valid for all $p_{i,j}$ such that $i-j=n$. Let $r,s$ such that $r-s=n+1$. We can rewrite  \eqref{recurrencia} in the form,
	
	\begin{equation}\label{pr}
	\begin{aligned}
	p_{r,s}&= t \, d_{r-1, s-1}p_{r,r} Y(r,s)  + t \, d_{r-1, s-1} p_{s,s} Y(s,r)+\\
	&\quad +(1+t\,  d_{s,s-1})p_{r,s+1} Y(r,s) + t\,  d_{r-1, s}\,  p_{s+1,s} Y(s,r)+\\
	&\quad + t\, d_{r-2, s-1}\,  p_{r,r-1} Y(r,s) + \, t\,  (1+d_{r-1, r-2})p_{r-1,s} Y(s,r)  +  \\
	&\quad + \sum_{l=s+2}^{r-2} t\, d_{l-1, s-1}\,  p_{r,l}\, Y(r,s)+  t d_{r-1, l-1}\,  p_{l,s} Y(s,r)\\
	&=  p_{r,r}\, f_r(r, s) Y(r,s)  + p_{s,s} f_s(r, s) Y(s,r) +\\
	&\quad + p_{r,s+1}  f_{r}(s+1, s) Y(r,s) +p_{s+1,s} \, f_s(r, s+1) Y(s,r) +\\
	&\quad + p_{r,r-1}f_{r}(r-1,s)Y(r,s) + p_{r-1,s}f_{s}(r,r-1)Y(s,r) +\\
	&\quad + \sum_{l=s+2}^{r-2}p_{r,l}f_{r}(l,s)Y(r,s) + p_{l,s}f_{s}(r,l)Y(s,r)\\
	&= \sum_{l=s}^{r}\big( p_{r,l}f_{r}(l,s)Y(r,s) + p_{l,s}f_{s}(r,l)Y(s,r)\big)
	\end{aligned}
	\end{equation}

	Let $\vec{a}=(a_1,a_2, \ldots, a_k)\in I_{r,s}$. Then $ Y_k(a_{i}, a_i) Y_k(a_{i}, a_1)=Y(a_{i}, s) Y(a_{i}, r)$. Moreover the $Y_{i,j}$ satisfy
	\[Y_{x,y} Y_{x,z}= Y_{x,y} Y_{y,z} +Y_{x,z} Y_{z,y} \] 
	
	Thus 
	$ Y(a_{i}, s) Y(a_{i}, r)= Y(a_{i}, s) Y(s, r)+Y(a_{i}, r) Y(r, s)$. Note also that $f_{a_i}(r,a_{2})=f_{s}(r,a_{2})$ and $f_{a_i}(a_{k-1}, s)=f_{r}(a_{k-1}, s)$ for $1<i<k$. These identities for $1<i<k$  implies

	\begin{align*}  F_i(\vec{a})&= p_{a_i,a_i}\left( \prod_{j=1}^{k-1}f_{a_i}(a_j,a_{j+1}) Y_k(a_{i}, a_{j})\right)\\
	&=f_{s}(r,a_{2}) \, \,  p_{a_i,a_i}\left( \prod_{j=2}^{k-1}f_{a_i}(a_j,a_{j+1}) Y_k(a_{i}, a_{j})\right) Y( s,r)+\\
	&\quad  + f_{r}(a_{k-1},s) \, \,  p_{a_i,a_i}\left( \prod_{j=1}^{k-2}f_{a_i}(a_j,a_{j+1}) Y_k(a_{i}, a_{j})\right) Y(r,s)\\
	&= f_{s}(r,a_{2}) Y( s,r) \, \, F_{i-1}(a_2,a_3, \ldots s) + f_{r}(a_{k-1},s) Y( r,s)  F_{i}(r,a_2, \ldots a_{k-1}) 
	\end{align*}
	
	Hence:
	
	\begin{align*}
	\sum_{i=1}^k F_i(\vec{a})&=F_1(\vec{a})+\sum_{i=2}^{k-1} F_i(\vec{a})+F_k(\vec{a})  \\
	&= f_{r}(a_{k-1},s) Y(r,s) \,  F_{1}(r,a_2, \ldots a_{k-1})+\\
	&\quad  +f_{s}(r,a_{2}) Y(s,r) \sum_{i=2}^{k-1} F_{i-1}(a_2,a_3, \ldots s) +\\
	&\quad  + f_{r}(a_{k-1},s) Y(r,s) \sum_{i=2}^{k-1} F_{i}(r,a_2, \ldots a_{k-1})+ \\
	&\quad + f_{s}(r,a_{2}) Y(s,t)  F_{k-1}(a_2, \ldots s) \\
	&=f_{r}(a_{k-1},s) Y(s,r) \, \sum_{i=1}^{k-1}  F_{i}(a_2,a_3, \ldots s)+\\
	&\quad  + f_{s}(r,a_{2}) Y(r,s)  \sum_{i=1}^{k-1} F_{i}(r,a_2, \ldots a_{k-1})
	\end{align*}
	Summing  over all the decreasing sequences $\vec{a}\in I_{r.s}$, the first summand   by induction hypothesis will be equal to $f_{r}(a_{k-1},s) Y(s,r) \, p_{r,a_2}$ and the second summand to 
	$f_{s}(r,a_{2}) Y(r,s)  \, p_{a_{k-1},s}$. Thus the right hand side of \eqref{coeficiente r,s} is 
	\[\sum_{\vec{a}\in I_{r.s}} f_{r}(a_{k-1},s) Y(s,r) \, p_{r,a_2}+f_{s}(r,a_{2}) Y(r,s)  \, p_{a_{k-1},s}\]
	
	but this is coincides with the last equality  in \eqref{pr} which proves the inductive the inductive step.
\end{proof}

Now,  is enough to find $D$ such that $q(2,i,l), q(3,i,l), q(4,i,l)$ are zero. This can be done computationally using a software. In the following subsections we give the list of each algebra and each derivation $D$. By the last proposition, the matrix $g_y$ is determined in terms of the coefficients of $D$.

\subsection{Case $\g=\mu_9^{1,\alpha,\beta}$}

For $\alpha\notin\{-1,0,1, \tfrac{1}{2}\}$

{\tiny\[D=\begin{pmatrix}
	2 & 0 & 0 & 0 & 0 & 0 & 0 & 0 \\[-0.1cm]
	0 & 3 & 0 & 0 & 0 & 0 & 0 & 0 \\[-0.1cm]
	0 & p_{3,2} & 4 & 0 & 0 & 0 & 0 & 0 \\
	0& 0 & p_{4,3} & 5 & 0 & 0 & 0 & 0 \\[-0.1cm]
	0 & 0 & p_{5,3} & p_{5,4} & 6 & 0 & 0 & 0 \\[-0.1cm]
	0 & 0 & 0 & 0 & p_{6,5} & 7 & 0 & 0 \\[-0.1cm]
	0 & 0 & 0 & 0 & p_{7,5} &  p_{7,6} & 8 & 0 \\[-0.1cm]
	0 & 0 & 0 & 0 & p_{8,5} & p_{8,6} & p_{8,7} & 9 \\[-0.1cm]
	\end{pmatrix}\]}

where

\begin{align*}
p_{3,2} =& \frac{2(\alpha-1)}{3 \alpha^2 (2 \alpha-1)}\\
p_{4,3} =& \frac{(\alpha+2)}{3\,  \alpha^2 (1-\alpha)}\\
p_{5,3} =& \frac{(\alpha+2) (\alpha\,  \beta-1)}{(\alpha-1)^2 \alpha^2}\\
p_{5,4} =& \frac{2(\alpha-1)}{3 \alpha^2 (2 \alpha-1)}\\
p_{6,5} =& \frac{\alpha+2}{3\,  \alpha^2}\\
p_{7,5} =& -\frac{(\alpha+2) (6 \alpha^2 \beta-3 \alpha \beta-7 \alpha+4)}{3(\alpha-1)^2 \alpha^2}\\
p_{7,6} =& \frac{3 \alpha^2-2 \alpha+2}{3 \alpha^2(1-\alpha)}\\
p_{8,5} =&  -\frac{(\alpha+2) (5 \alpha \beta+\beta-6)}{3 (\alpha-1)^2 \alpha^2}\\
p_{8,6} =& \frac{12 \alpha^3 \beta-6 \alpha^2 \beta-6 \alpha^2+\alpha+2}{3 \alpha^2 (2 \alpha-1)(1-\alpha)}\\
p_{8,7} =&  \frac{6 \alpha^2-5 \alpha+2}{1-2 \alpha^2 (2 \alpha  )}
\end{align*}

\subsection{$\g=	\mu_9^{4,\alpha,\beta}$ }\ 

{\tiny \[D=\begin{pmatrix}
	2 & 0 & 0 & 0 & 0 & 0 & 0 & 0 \\[-0.1cm]
	0 & 3 & 0 & 0 & 0 & 0 & 0 & 0 \\[-0.1cm]
	0 & \frac{\alpha}{56} & 4 & 0 & 0 & 0 & 0 & 0 \\[-0.1cm]
	0 & \frac{1}{24} \left(\alpha^2-4 \beta\right) & -\frac{\alpha}{24} & 5 & 0 & 0 & 0 & 0 \\[-0.1cm]
	0 & 0 & 0 & \frac{\alpha}{56} & 6 & 0 & 0 & 0 \\[-0.1cm]
	0 & -\frac{4}{3} & 0 & \frac{1}{8} \left(4 \beta-\alpha^2\right) & \frac{\alpha}{8} & 7 & 0 & 0 \\[-0.1cm]
	0 & 0 & 0 & -\frac{1}{24} \alpha \left( \alpha^2 -4 \beta \right) & \frac{\alpha^2}{24} & -\frac{1}{24} (7 \alpha) & 8 & 0 \\[-0.1cm]
	0 & 0 & 0 & -\frac{1}{24} \left(\alpha^2 -4 \beta \right) \beta & \frac{\alpha \beta}{24} & \frac{1}{84} \left(-3 \alpha^2-14 \beta\right) & -\frac{1}{56} (13 \alpha) & 9 \\[-0.1cm]
	\end{pmatrix}\]}

\subsection{$\g=	\mu_9^{8}$ }\

{\tiny\[D=\begin{pmatrix}
	2 &  &  &  &  &  &  & \\[-0.1cm]
	0 & 3 &  &  &  &  &  & \\[-0.1cm]
	0 & 0 & 4 &  &  &  &  & \\[-0.1cm]
	0 & 0& 0 & 5 &  &  &  &  & \\[-0.1cm]
	0 & 0 & 0 & 0 & 6 &  &  & \\[-0.1cm]
	0 & 0 & 0 & 0 & 0 & 7 &  & \\[-0.1cm]
	0 & 0 & 0 & 0 & 0 & 0 & 8 & \\[-0.1cm]
	0 & 0 & 0 & 0 & 0 & -2 & 0 & 9\\       
	\end{pmatrix}\]}

\subsection{$\g=\mu_9^{10,\alpha,\beta}$ }\ 

{	\tiny\[D=\begin{pmatrix}
	3 &  &  &  &  &  &  & \\[-0.1cm]
	0 & 4 &  &  &  &  &  & \\[-0.1cm]
	0 & \frac{\alpha^2-2 \alpha+\beta}{\alpha} & 5 &  &  &  &  & \\[-0.1cm]
	0 & 0& 0 & 6 &  &  &  &  & \\[-0.1cm]
	0 & 0 & 0 & \frac{\alpha^2-3 \alpha+\beta}{\alpha} & 7 &  &  & \\[-0.1cm]
	0 & 0 & 0 & -\frac{(\alpha^3-5\alpha^2-\alpha\beta+6\alpha-2\beta}{\alpha} & \alpha -2& 8 &  & \\[-0.1cm]
	0 & 0 & 0 & 0 & \frac{\beta(\alpha^2-2\alpha+2\beta}{\alpha^2} & -\frac{\beta}{\alpha}  & 9 & \\[-0.1cm]
	0 & 0 & 0 & 0 & 0 & 0 & 0 & 11\\       
	\end{pmatrix}\]}

\subsection{$\g=\mu_9^{11,\alpha,\beta}$ }\ 
{\tiny\[D=\begin{pmatrix}
	3 &  &  &  &  &  &  & \\[-0.1cm]
	0 & 4 &  &  &  &  &  & \\[-0.1cm]
	0 & \alpha-1 & 5 &  &  &  &  & \\[-0.1cm]
	0 & 0& 0 & 6 &  &  &  &  & \\[-0.1cm]
	0 & 0 & 0 & \alpha-2 & 7 &  &  & \\[-0.1cm]
	0 & 0 & 0 & 3\alpha-2\beta -2 & -1& 8 &  & \\[-0.1cm]
	0 & 0 & 0 & 0 & 2\alpha^2-\alpha  & -\alpha  & 9 & \\[-0.1cm]
	0 & 0 & 0 & 0 & -2\alpha^2\beta & 2\alpha \beta & -2\beta & 11\\       
	\end{pmatrix}\]}

\subsection{$\g=\mu_9^{12,\alpha,\beta}$ }\ 
{\tiny\[D=\begin{pmatrix}
	3 &  &  &  &  &  &  & \\[-0.1cm]
	0 & 4 &  &  &  &  &  & \\[-0.1cm]
	0 & -3\alpha-\tfrac{7}{3} & 5 &  &  &  &  & \\[-0.1cm]
	0 & 0& 0 & 6 &  &  &  &  & \\[-0.1cm]
	0 & 0 & 0 & -3\alpha -\tfrac{10}{3} & 7 &  &  & \\[-0.1cm]
	0 & 0 & 0 & -5\alpha-\tfrac{70}{9} +\tfrac{2}{3}\beta & -\tfrac{7}{3}& 8 &  & \\[-0.1cm]
	0 & 0 & 0 & 0 & 18\alpha^2+7\alpha  & 3\alpha  & 9 & \\[-0.1cm]
	0 & 0 & 0 & 0 & -18\alpha^2\beta & -6\alpha \beta & -2\beta & 11\\       
	\end{pmatrix}\]}
\subsection{$\g=\mu_9^{20,\alpha,\beta}$ }\ 
{\tiny\[D=\begin{pmatrix}
	3 &  &  &  &  &  &  & \\[-0.1cm]
	0 & 4 &  &  &  &  &  & \\[-0.1cm]
	0 & 0 & 5 &  &  &  &  & \\[-0.1cm]
	0 & 0 & 0 & 6 &  &  &  & \\[-0.1cm]
	0 & 0 & 0 & -\tfrac{\alpha-\beta}{\alpha-2} & 7 &  &  & \\[-0.1cm]
	0 & 0 & 0 & 0 & -\tfrac{\beta}{\alpha} & 8 &  & \\[-0.1cm]
	0 & 0 & 0 & 0 &  \tfrac{\beta^2}{\alpha^2} & -\tfrac{\beta}{\alpha} & 9 & \\[-0.1cm]
	0 & 0 & 0 & 0 & - \tfrac{\beta^2}{\alpha^2} & \tfrac{\beta}{\alpha} & -1 & 10\\       
	\end{pmatrix}\]}

\begin{remark}
The algebras $\mu_9^{1,-1,\beta}, \mu_9^{1,0,\beta}, \mu_9^{1,1,\beta}, \mu_9^{1,\tfrac{1}{2},\beta}$ and  $ \mu_{17}^{(\alpha, \beta)}$ with  $\beta \neq 0$ doesn't fix in any our last  analysis,  in the first case because all the derivations of  $\mathfrak{h}_1$ are nilpotent, and in the second case because, despite $\mathfrak{h}_2$ have semi simple derivations, they have repeated eigenvalues. Thus formula \eqref{coeficiente r,s} is not valid. \\

The case $\mu_9^{1,-1,\beta}$ is specially remarkable since neither $\mathfrak{h}_1$ or $\mathfrak{h}_2$ have semisimple derivations.
\end{remark}

\section*{Acknowledgments}

Sonia V. is supported by MINEDUC-UA project, code ANT 1999.

\begin{table}{Appendix:\label{coeficientesFiliformes}\\
We give all the explicit values of $ C^9_{r,s}$ for each non isomorphic  filiform Lie Algebra using the classification in \cite{GJMK}.}
\begin{center}\tiny{
\begin{tabular}{|c|ccccccccc|}\hline
	$\mu_9^{i}$  &  $C^9_{4,5}$ &  $C^9_{3,6}$ &  $C^9_{2,7}$ & $C^9_{3,5}$ & $C^9_{2,6}$ & $C^9_{3,4}$ &  $C^9_{2,5}$ & $C^9_{2,4}$ &  $C^9_{2,3}$\\\hline
	1 & $\frac{3 \alpha^2}{\alpha +2}$ & $\alpha-\frac{3 \alpha^2}{\alpha +2}$ & $\frac{2-5 \alpha}{\alpha +2}$ & 1 & -2 & $\beta $ & $-\beta$  & 0 & 0 \\\hline
	2 & $\frac{3 \alpha^2}{\alpha +2}$ & $\alpha-\frac{3 \alpha^2}{\alpha +2}$ & $\frac{2-5 \alpha}{\alpha +2}$ & 0 & 0 & 1 & -1 & 0 & 0 \\\hline
	4 & 8 & -4 & -3 & $\alpha$ & -2 $\alpha$ & $\beta$  & $-\beta$  & 0 & 1 \\\hline
	5 & 8 & -4 & -3 & 0 & 0 & $\alpha$ & $-\alpha$ & 0 & $\alpha$ \\\hline
	6 & 8 & -4 & -3 & 0 & 0 & 0 & 0 & 0 & 1 \\\hline
	7 & 3 & -2 & 0 & 0 & 1 & $\alpha$ & $-\alpha$ & 0 & 0 \\\hline
	8 & 3 & -2 & 0 & 0 & 0 & 1 & -1 & 0 & 0 \\\hline
	10 & 1 & -1 & 1 & 1 & $\alpha-2$ & 0 & $\beta$  & 0 & 0 \\\hline
	11 & 1 & -1 & 1 & 1 & -1 & $\beta$  & $\alpha-\beta$  & 0 & 0 \\\hline
	12 & 1 & -1 & 1 & 1 & $-\frac{7}{3}$ & $\beta$  & $\alpha-\beta$  & 0 & 0 \\\hline
	13 & 1 & -1 & 1 & 0 & 1 & 0 & $\alpha$ & 0 & 0 \\\hline
	14 & 1 & -1 & 1 & 0 & 0 & 0 & 1 & $\alpha$ & 0 \\\hline
	15 & 1 & -1 & 1 & 0 & 0 & 0 & 0 & 1 & 0 \\\hline
	17 & 0 & 0 & 1 & 1 & -2 & $\beta$  & $\alpha-\beta$  & 0 & 0 \\\hline
	18 & 0 & 0 & $\alpha$ & 0 & 0 & 1 & $\beta-1$ & 1 & 0 \\\hline
	19 & 0 & 0 & 1 & 0 & 0 & 1 & $\alpha-1$ & 0 & 0 \\\hline
	20 & 0 & 0 & 0 & 1 & $\alpha-2$ & 1 & $\beta-1$ & 0 & 0 \\\hline
	21 & 0 & 0 & 0 & 1 & $\alpha-2$ & 0 & 1 & 0 & 0 \\\hline
	23 & 0 & 0 & 0 & 0 & 1 & 1 & $\alpha-1$ & 0 & 0 \\\hline
	24 & 0 & 0 & 0 & 0 & 0 & 1 & $\alpha-1$ & 1 & 0 \\\hline
	26 & 0 & 0 & $\alpha$ & 0 & 0 & 0 & 1 & 1 & $\beta$  \\\hline
	27 & 0 & 0 & 1 & 0 & 0 & 0 & 1 & 0 & $\alpha$ \\\hline
	28 & 0 & 0 & $\alpha$ & 0 & 0 & 0 & 0 & 1 & 1 \\\hline
	29 & 0 & 0 & 1 & 0 & 0 & 0 & 0 & 1 & 0 \\\hline
	30 & 0 & 0 & 1 & 0 & 0 & 0 & 0 & 0 & 1 \\\hline
	31 & 0 & 0 & 0 & 0 & 1 & 0 & 1 & 0 & $\alpha$ \\\hline
	32 & 0 & 0 & 0 & 0 & 1 & 0 & 0 & 0 & 1 \\\hline
	34 & 0 & 0 & 0 & 0 & 0 & 0 & 1 & 0 & 1 \\\hline
\end{tabular}
}
\end{center}
\end{table}




\begin{thebibliography}{99}




\bibitem{GJMK} Gómez, J.R,\ Jiménez-Merchán, A.,\ Khakimdjanov, Y.\ (1998).  
     Low-dimensional filiform Lie algebras, 
     \emph{J.\ Pure\ Appl.\ Algebra}, 180, 133-158 . 

\bibitem{GO} Grunewald, F.,\ O'Halloran, J.\ (1993).
     Deformations of Lie algebras, 
    \emph{J.\ Algebra}, 162(1), 210--224. 
   
 \bibitem{HGT} Herrera-Granada, J.F., Tirao, P.\ (2014). 
     Filiform Lie algebras of dimension 8 as degenerations, 
    \emph{J.\ Algebra\ Appl.}, 13(4), 10 pages.   
   

\bibitem{HGT} Herrera-Granada, J.F., Tirao, P.\ (2016). 
    The Grunewald-O'Halloran conjecture for nilpotent Lie algebras of rank $\geq 1$, 
   \emph{Comm. Algebra}, 44(5), 2180-2192. 
   

  


\end{thebibliography}
\end{document}